\documentclass{article}
\pdfoutput=1
\usepackage{graphicx,url}
\usepackage[all]{xy}

\usepackage{amsfonts}
\usepackage{amssymb,amsmath,amscd}

\newtheorem{theorem}{Theorem}
\newtheorem{lemma}[theorem]{Lemma}
\newtheorem{proposition}[theorem]{Proposition}
\newtheorem{corollary}[theorem]{Corollary}
\newtheorem{definition}{Definition}

\usepackage{amsfonts,amssymb,amsmath,,amscd}
\newcommand{\C}{\ensuremath{\mathbb{C}}}

\newcommand{\R}{\ensuremath{\mathbb{R}}}

\newcommand{\Z}{\ensuremath{\mathbb{Z}}}

\date{\today}
\begin{document}

\title{The angular momentum of a relative equilibrium}

\author{Alain Chenciner\\
  \\
  \small Observatoire de Paris, IMCCE (UMR 8028), ASD\\
  \small 77, avenue Denfert-Rochereau, 75014 Paris, France\\
  \small \texttt{chenciner@imcce.fr}}

\maketitle

\abstract
There are two main reasons why relative equilibria of $N$ point masses under the influence of Newton attraction are mathematically more interesting to study when space dimension is at least 4: 

1) in a higher dimensional space, a relative equilibrium is determined not only by the initial configuration but also by the choice of a hermitian structure on the space where the motion takes place (see \cite{AC}); in particu\-lar, its angular momentum depends on this choice;

2) relative equilibria are not necessarily periodic: if the configuration is {\it balanced} but not central (see \cite{AC,A2,C1}), the motion is in general quasi-periodic. 

In this exploratory paper we address the following question, which touches both aspects: what are the possible frequencies of the angular momentum of a given central (or balanced) configuration and at what values of these frequencies bifurcations from periodic to quasi-periodic relative equilibria do occur ? We give a full answer for relative equilibrium motions in $R^4$ and conjecture that an analogous situation holds true for higher dimensions. A refinement of Horn's problem given in \cite{FFLP} plays an important role.

\section{A brief review of the relative equilibrium solutions of the $N$-body problem }
Let $x=(\vec r_1,\vec r_2,\cdots,\vec r_N)\in E^N$ be a configuration of $N$ point masses $m_1,m_2,\cdots,m_N$, in the euclidean space $(E,\epsilon)$ of dimension $d$. 
{\it Choosing once and for all an appropriate galilean frame, we shall only consider configurations whose center of mass is fixed at the origin:} $\sum_{k=1}^Nm_k\vec r_k=0.$

\noindent We shall identify $x$ with the $d\times N$ matrix $X$ whose $k$th column is composed of the coordinates $(r_{1k},\cdots,r_{dk})$ of $\vec r_k$ in some orthonormal basis of $E$. 
\smallskip

\noindent{\bf Remark.} Such a matrix $X$ can be thought of as representing the element $x:\xi\mapsto\sum_{k=1}^N\xi_k\vec r_k \in Hom({\cal D}^*,E)\equiv\mathcal{D}\otimes E$, where the {\it dispositions space} $\mathcal{D}$ and its dual ${\cal D}^*$,
$${\cal D}={\R}^N/(1,\cdots,1)\R,\quad {\cal D}^*=\left\{(\xi_1,\cdots,\xi_N)\in\R^N,\sum\xi_i=0\right\},$$ 
were introduced in \cite{AC}. Indeed, considered as a linear mapping from $(\R^N)^*\equiv \R^N$ to $E$, it is the unique extension of $x:{\mathcal D}^*\to E$ which vanishes on the line generated by $(m_1,\cdots,m_N)$.
\smallskip

\noindent The equations of the $N$-body problem may be written $$\ddot x=\nabla U(x),\quad \hbox{where}\quad  U(x)=\sum_{i<j}\frac{m_im_j}{||\vec r_i-\vec r_j||_\epsilon},$$
the gradient being the one defined by the {\it mass scalar product} which, on configurations whose center of mass is at the origin, is given by the formula
$$x'\cdot x''=(\vec r'_1,\vec r'_2,\cdots,\vec r'_N)\cdot (\vec r''_1,\vec r''_2,\cdots,\vec r''_N)=\sum_{k=1}^Nm_k<\vec r'_k,\vec r''_k>_\epsilon,$$

A relative equilibrium solution is an equilibrium of the ``reduced" equations, obtained by going to the quotient by translations (this was already accomplished by choosing a galilean frame where the center of mass is fixed at the origin) and linear isometries. It is proved in \cite{AC} that these are exactly the {\it rigid motions}, where every mutual distance $||\vec r_i-\vec r_j||_\epsilon$ stays constant, that is where the $N$-body configuration behaves as a rigid body. Moreover, the motion is of the form $X(t)=e^{t\Omega}X_0$, that is
$x(t)=(e^{\Omega t}\vec r_1,e^{\Omega t}\vec r_2,\cdots,e^{\Omega t}\vec r_N)$ if $x_0=(\vec r_1,\vec r_2,\cdots,\vec r_N)$, where $\Omega$ is an 
$\epsilon$-antisymmetric operator on $E$ and, {\it if we call $E$ the actual space of motion} (forgetting the non visited dimensions), $\Omega$ is non degenerate. In particular, the dimension of $E$ is even: $d=2p$.
Choosing an orthonormal basis where $\Omega$ is normalized, this amounts to saying that
there exists a hermitian structure on the space $E$ and an orthogonal decomposition $E\equiv\C^p=\C^{k_1}\times\cdots\times\C^{k_r}$ such that
$$x(t)=(x_1(t),\cdots,x_r(t))=(e^{i\omega_1t}x_1,\cdots,e^{i\omega_rt}x_r),$$
where $x_m$ is the orthogonal projection on $\C^{k_m}$ of the $N$-body configuration $x$ and  the action of $e^{i\omega_mt}$ on $x_m$ is the diagonal action on each body of the projected configuration. Such quasi-periodic motions exist only for very special configurations, called {\it balanced configurations} in \cite{AC}, which are characterized by the existence of an $\epsilon$-symmetric endomorphism $\Sigma$ of $E$
such that $\nabla U(x)=\Sigma x$. The most degenerate balanced configurations, indeed the only ones occuring in a space of dimension at most 3 (and hence with the actual space of motion $E$ of dimension 2), are the {\it central configurations} for which $\Sigma=\lambda Id$. 
In this case, $\Omega=\omega J$, with $J$ a hermitian structure on $E$, and the motion is
$$x(t)=(\vec r_1(t),\cdots,\vec r_N(t))=e^{i\omega t}x_0=(e^{i\omega t}\vec r_1,\cdots,e^{i\omega t}\vec r_N)$$ in the hermitian space $E\equiv\C^{2p}$; in particular, it is periodic.

\section{Angular momentum}
Given a configuration $x=(\vec r_1,\cdots,\vec r_N)$ and a configuration of velocities $y=\dot x=(\vec v_1,\cdots, \vec v_N)$, both with center of mass at the origin: 
$\sum_{k=1}^Nm_k\vec r_k=\sum_{k=1}^Nm_k\vec v_k=0$, the {\it angular momentum} of $(x,y)$ is the bivector {\it (we use the french convention $^{t\!}x$ for the transposed of $x$)}
$${\mathcal C}=\sum_{k=1}^Nm_k\vec r_k\wedge\vec v_k=-x\mu^{t\!}y+y\mu^{t\!}x\in\wedge^2E\equiv Hom_a(E^*,E)$$
(the isomorphism $\mu:{\mathcal D}\to{\mathcal D}^*$ is the mass scalar product on ${\mathcal D}$, see\cite{AC}).
In an orthonormal basis of $E$ where $x$ and $y$ are repectively represented by the $d\times N$ matrices $X$ and $Y$,  setting $M=diag(m_1,\cdots,m_N)$, it can be identified with the antisymmetric matrix
$$C=-XM^{t\!}Y+YM^{t\!\!}X\;\;\hbox{with coefficients}\;\; c_{ij}
=\sum_{k=1}^Nm_k(-r_{ik}v_{jk}+r_{jk}v_{ik}).$$
The dynamics of a solid body is determined by its {\it inertia tensor} (with respect to its center of mass) 
$${\mathcal S}=x\mu^{t\!}x\in Hom_s(E^*,E).$$ 
In the case of an 
$N$-body configuration $x$ whose center of mass is at the origin, ${\mathcal S}$ is identified with the symmetric matrix 
$$S=XM^{t\!\!}X\;\;\hbox{with coefficients}\;\; 
s_{ij}=\sum_{k=1}^Nm_kr_{ik}r_{jk},$$
whose trace is the {\it moment of inertia of the configuration $x$ with respect to its center of mass}, that is its square norm $|x|^2$ for the mass scalar product: 
$$trace\, S=I(x)=|x|^2.$$
In particular, the angular momentum of a relative equilibrium solution $x(t)=e^{t\Omega}x_0$
is represented, if $S_0=X_0M^{t\!\!}X_0$, by the antisymmetric matrix 
$$C=S_0\Omega+\Omega S_0.$$

\begin{lemma} The symmetric matrices $\Omega^2$ and $S_0$ commute.
\end{lemma}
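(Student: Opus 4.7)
The plan is to combine two elementary facts. First, applying the equation of motion $\ddot X=\nabla U(X)$ to the relative equilibrium solution $X(t)=e^{t\Omega}X_0$ and evaluating at $t=0$ gives
$$\nabla U(X_0)=\Omega^2 X_0.$$
Second, the rotational invariance of $U$ yields an infinitesimal identity on the gradient. Writing the mass scalar product in trace form $x'\cdot x''=\mathrm{trace}(X'M\,{}^tX'')$, the equality $U(e^{tA}X)=U(X)$ differentiated at $t=0$ gives $\mathrm{trace}(\nabla U(X)\,M\,{}^tX\,{}^tA)=0$ for every $\epsilon$-antisymmetric $A$; hence the $d\times d$ matrix $\nabla U(X)\,M\,{}^tX$ is symmetric for every configuration $X$.

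Specialising to $X=X_0$ and substituting the first identity, the product $\Omega^2 S_0=\Omega^2 X_0\,M\,{}^tX_0=\nabla U(X_0)\,M\,{}^tX_0$ is a symmetric matrix. Now $\Omega^2$ is itself symmetric (${}^t\Omega=-\Omega$ implies ${}^t(\Omega^2)=\Omega^2$) and $S_0$ is symmetric, and the product of two symmetric matrices is symmetric iff they commute. This gives $\Omega^2 S_0=S_0\Omega^2$.

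No step here constitutes a real obstacle: the lemma is essentially Noether's theorem read at $t=0$, the subtle point being just to keep track of transposes so that the symmetry of $\Omega^2 S_0$ can actually be upgraded to a commutation. An equivalent, slightly more intrinsic route bypasses $\nabla U$ altogether and uses conservation of $C$: along the relative equilibrium $S(t)=e^{t\Omega}S_0 e^{-t\Omega}$, so $C(t)=S(t)\Omega+\Omega S(t)=e^{t\Omega}C_0 e^{-t\Omega}$, and $C(t)\equiv C_0$ forces $[\Omega,C_0]=0$; expanding $C_0=S_0\Omega+\Omega S_0$, this bracket equals $\Omega^2 S_0-S_0\Omega^2$, which gives the same conclusion.
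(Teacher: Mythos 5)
Your first argument is, up to packaging, the paper's own. The paper encodes the rotational invariance of $U$ in the Wintner--Conley matrix $A$ satisfying $\nabla U(x)=2xA$ and $AM=M\,{}^{t\!}A$, from which $\Omega^2 S_0=2X_0AM\,{}^{t\!}X_0$ is manifestly symmetric; you reach the same symmetric product $\nabla U(X_0)\,M\,{}^{t\!}X_0$ by differentiating $U(e^{tA}X)=U(X)$ on the spot instead of quoting the factored form. Both finish with the same (correct) observation that a product of two symmetric matrices is symmetric if and only if they commute, a step the paper leaves implicit.

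Your second argument is a genuinely different and arguably cleaner route. It never touches $\nabla U$: it uses only the rigid form $X(t)=e^{t\Omega}X_0$ of the motion and conservation of the angular momentum, giving $C(t)=e^{t\Omega}C_0e^{-t\Omega}\equiv C_0$, hence $[\Omega,C_0]=0$, hence $\Omega^2S_0-S_0\Omega^2=[\Omega,\,S_0\Omega+\Omega S_0]=0$. This makes visible the structural fact, recorded later in the paper's Remark~1 of Section~3, that along a relative equilibrium the angular momentum and the angular velocity commute. What the paper's Wintner--Conley route buys in exchange is the companion computation on the $\mathcal{D}$ side, which yields the balanced-configuration equation $[A,\,{}^{t\!}X_0X_0]=0$ for free.
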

\begin{proof}
The invariance under isometries of the Newton potential implies (see \cite{AC} but you can do it by yourself) the factorization
$\nabla U(x)=2xA$, where the $\mu^{-1}$-symmetric {\it Wintner-Conley endomorphism} $A:{\mathcal D}^*\to{\mathcal D}^*$ is represented by the $N\times N$ matrix 
$$A=\frac{1}{2}\begin{pmatrix}
-\sum_{i\not=1}\frac{m_i}{||\vec r_1-\vec r_i||^3}&\frac{m_1}{||\vec r_1-\vec r_2||^3}&\cdots&\frac{m_1}{||\vec r_1-\vec r_N||^3}\\
\frac{m_2}{||\vec r_2-\vec r_1||^3}&-\sum_{i\not=2}\frac{m_i}{||\vec r_2-\vec r_i||^3}&\cdots&\frac{m_2}{||\vec r_2-\vec r_N||^3}\\
\cdots&\cdots\cdots&\cdots&\cdots\\
\frac{m_N}{||\vec r_N-\vec r_1||^3}&\frac{m_N}{||\vec r_N-\vec r_2||^3}&\cdots&-\sum_{i\not=N}\frac{m_i}{||\vec r_N-\vec r_i||^3}
\end{pmatrix}$$
which satisfies $AM=M^{t\!\!}A$.  As $\ddot X(0)=\Omega^2X_0=2X_0A$, this implies that
$\Omega^2S_0=2X_0AM^{t\!\!}X_0$ is symmetric, which proves the lemma (see also \cite{BCRT}). 
\end{proof}
\smallskip

\noindent {\bf Remark.} Doing the same computation on the $\mathcal{D}$ side instead of the $E$ side leads to the equation of balanced configurations (see \cite{AC}):
$$^t\!X_0X_0A=\frac{1}{2}{^t\!X_0}\Omega^2X_0=A^t\!X_0X_0,$$ that is the commutation of $A$ with the {\it intrinsic inertia} $B={^t\!X_0}X_0$.

\goodbreak

\begin{corollary} Let $x_0$ be an N-body balanced configuration in the euclidean space $(E,\epsilon)$. Let $x(t)=e^{t\Omega }x_0$ be a relative equilibrium motion  of $x_0$. The frequencies of its angular momentum $\mathcal{C}\in\Lambda^2E$ (i.e. the moduli of the eigenvalues of the antisymmetric matrix $C=S_0\Omega+\Omega S_0$ can be written  
$$\nu_1^1,\cdots,\nu_1^{k_1+1},\nu_2^1,\cdots,\nu_2^{k_2+1},\cdots,\nu_r^1,\cdots,\nu_r^{k_r+1}$$ so that the following identity holds true:
$$\frac{1}{\omega_1}(\nu_1^1+\cdots+\nu_1^{k_1+1})+\frac{1}{\omega_2}(\nu_2^1+\cdots+\nu_2^{k_2+1})+\cdots+\frac{1}{\omega_r}(\nu_r^1+\cdots+\nu_r^{k_r+1})=I(x_0).$$
\end{corollary}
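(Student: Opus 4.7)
The plan is to combine Lemma~1 with a hermitian/anti-hermitian decomposition of $S_0$ on each joint eigenspace of $\Omega^2$, and then add up traces.

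First I would decompose $E = \bigoplus_{m=1}^{r} E_m$, where $E_m$ is the $(-\omega_m^2)$-eigenspace of $\Omega^2$, naturally identified with $\C^{k_m}$ so that $\Omega|_{E_m} = \omega_m J_m$ for the corresponding complex structure $J_m$. Lemma~1 gives $[\Omega^2, S_0]=0$, so $S_0$ preserves each $E_m$ and $C$ splits as $C = \bigoplus_m C_m$, with
\[
C_m = S_m\Omega_m + \Omega_m S_m, \qquad S_m := S_0|_{E_m},\ \Omega_m := \omega_m J_m.
\]

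Next I would write $S_m = H_m + K_m$ with $H_m = \tfrac12(S_m - J_m S_m J_m)$ (so $J_m H_m = H_m J_m$) and $K_m = \tfrac12(S_m + J_m S_m J_m)$ (so $J_m K_m = -K_m J_m$). A direct expansion collapses the anticommutator to $C_m = 2\omega_m J_m H_m$. Since $H_m$ is $\epsilon$-symmetric and $\C$-linear, it is a hermitian endomorphism of $\C^{k_m}$ with $k_m$ real eigenvalues $\lambda_1^m,\dots,\lambda_{k_m}^m$; the real antisymmetric operator $J_m H_m$ thus has eigenvalues $\pm i\lambda_j^m$, so the frequencies of $C_m$ are $\nu_m^j = 2\omega_m|\lambda_j^m|$. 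Moreover, because $S_0 = X_0 M\,{}^tX_0$ is positive semidefinite and the averaging $S_m \mapsto H_m$ preserves PSDness (an immediate computation using that $J_m$ is an isometry), one has $\lambda_j^m \ge 0$, so $|\lambda_j^m| = \lambda_j^m$.

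The identity now follows from a single trace computation. Since $J_m$ is orthogonal and anticommutes with $K_m$, $\mathrm{tr}\,K_m = 0$, so $\mathrm{tr}\,S_m = \mathrm{tr}\,H_m = 2\sum_j \lambda_j^m$. Summing over $m$,
\[
\sum_m \frac{1}{\omega_m}\sum_j \nu_m^j = 2\sum_{m,j}\lambda_j^m = \sum_m \mathrm{tr}\,S_m = \mathrm{tr}\,S_0 = I(x_0).
\]
Each block $E_m$ genuinely produces $k_m$ nonzero frequencies; if the list is to be padded to $k_m+1$ entries per block, one simply inserts a zero, which does not affect the sum.

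The main obstacle is conceptual rather than computational: one has to recognise that all the structural content of the identity is packed into Lemma~1, the hermitian/anti-hermitian splitting of $S_m$, and the positivity of the mass-weighted inertia $S_0$. The anti-hermitian part $K_m$ contributes nothing to either $C_m$ or $\mathrm{tr}\,S_m$, while PSDness of $S_0$ is what lets moduli of eigenvalues of $C_m$ be summed as unsigned quantities and still reproduce $\mathrm{tr}\,S_0$. The Horn-type refinement mentioned in the introduction plays no role in this particular identity; it should enter later, when characterising which multisets of frequencies can actually be realised.
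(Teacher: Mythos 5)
Your proof is correct and follows essentially the same route as the paper: use Lemma~1 to split $E$ into joint eigenspaces of $\Omega^2$ and $S_0$ on which $\Omega$ is a multiple of a complex structure, observe that $C$ splits accordingly into blocks $\omega_m(s_mJ_m + J_ms_m)$, and then prove the trace identity block by block. Where the paper defers the block computation to its Lemma~3, whose one-line proof is $\mathrm{tr}\,\Sigma = \mathrm{tr}(S + J^{-1}SJ) = 2\,\mathrm{tr}\,S_0$, you make the same calculation via the split $S_m = H_m + K_m$, noting $\mathrm{tr}\,K_m = 0$ and $\Sigma = 2H_m$; this is the same observation in slightly different dress.

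What you add that the paper does not make explicit is the verification that the eigenvalues $\lambda_j^m$ of $H_m$ are nonnegative, via $H_m = \tfrac12(S_m + J_m^{T}S_mJ_m)$ being an average of two PSD matrices. This is genuinely needed: the corollary defines the $\nu$'s as \emph{moduli} of eigenvalues of $C$, while the paper's Lemma~3 works with the (a priori signed) spectrum of the hermitian matrix $\Sigma$; equating the two requires exactly the PSD argument you supply. One small caveat on indexing: the paper is internally inconsistent about whether the $m$-th block is $\C^{k_m}$ (section~1) or $\C^{k_m+1}$ (the corollary); your ``pad with a zero'' remark is a workaround for this notational slip rather than a mathematically meaningful step — with the corollary's convention the block is $\C^{k_m+1}$ and already has $k_m+1$ eigenvalues, so no padding is needed.
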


\begin{proof} One chooses an orthonormal basis of $E$ in which the matrices 
$\Omega^2$ and $S_0$ are both diagonal:
\begin{equation*}
\left\{
\begin{split}
\Omega^2&=diag(-\omega_1^2,-\omega_1^2,\cdots,-\omega_2^2,-\omega_2^2,\cdots,
-\omega_r^2,-\omega_r^2),\\
S_0&=diag(\sigma_1,\sigma_2,\cdots,\sigma_{2p}).
\end{split}
\right.
\end{equation*}
To such a basis is associated an orthogonal decomposition 
$$E=E_1\oplus E_2\oplus\cdots\oplus E_r$$
 of $E$ into blocks of dimensions $2(k_1+1),2(k_2+1),\cdots,2(k_r+1)$, on each of which 
 $\Omega$ is a multiple of a complex structure:
$$\Omega=diag(\omega_1J_1,\omega_2J_2,\cdots,\omega_rJ_r),$$
where $J_i$ is a hermitian structure on $E_i\equiv\R^{2k_i}$. 
It follows that $C$ itself decomposes into $r$ blocks of  the form $\omega_i(s_iJ_i+J_is_i)$,
where $s_1,\cdots, s_r,$ are the diagonal blocks of $S_0$. 
Hence we are reduced to proving the trace identity in each block and adding them to get $I(x_0)=trace S_0=\sum_{i=1}^rtrace (s_i)$.

This (trivial) fact will be established in the next section where we study the case when 
$\Omega=\omega J$ is a multiple of a hermitian structure.
\end{proof}
\smallskip

\noindent {\bf Remark.} In his thesis \cite{A1}, Alain Albouy proved that the rank $2k$ of the angular momentum of an $n$-body motion satisfies the following inequalities, generalizing Dziobek's theorem:
$$2k\le d\le k+n-1,$$
where $d$ is the dimension of the actual space of motion, that is the dimension of the subspace generated at any instant by the bodies and their velocities. Recall that in the case of a relative equilibrium, $d$ is necessarily even. 
\smallskip

\section{The frequency mapping of a central configuration}

Given some inertia $2p\times 2p$ matrix $S_0$ (say, the one of a central configuration $x_0$, but it could be any symmetric non-negative matrix), we study the mapping $$J\mapsto \omega^{-1}C=S_0J+JS_0$$ from the space of hermitian structures on $E$ to the set of $2p\times 2p$ antisymmetric real matrices. We shall only be interested in the spectra of the matrices $\omega^{-1}C$, hence choosing an orientation for $J$ is harmless and we shall consider only those of the form $J=R^{-1}J_0R$, where $J_0=\begin{pmatrix}0&-Id\\ Id&0\end{pmatrix}$ and $R\in SO(2p)$. 
The matrix $\omega^{-1}C$ is actually $J$-skew-hermitian, with spectrum $i\nu_1,\cdots,i\nu_p$ if considered as a complex matrix. Replacing it by $\Sigma=J_0^{-1}R\omega^{-1}CR^{-1}=J_0^{-1}SJ_0+S$, where $S=RS_0R^{-1}$, makes it $J_0$-hermitian with spectrum 
$\nu_1,\cdots,\nu_p$ that we can suppose to be ordered. 
\begin{definition}
The {\it frequency mapping} 
$${\mathcal F}:U(p)\backslash SO(2p)\to W_p^+=\{(\nu_1,\cdots\nu_p)\in\R^p,   \nu_1\ge\cdots\ge\nu_p\}$$ associates to each hermitian structure $J=R^{-1}J_0R$ the ordered spectrum
$(\nu_1,\cdots,\nu_p)$ of the $J_0$-hermitian matrix
$\Sigma=J_0^{-1}(RS_0R^{-1})J_0+RS_0R^{-1}$.
\end{definition}
\smallskip

\noindent We have chosen the notation $U(p)\backslash SO(2p)$ because two rotations $R,R'$ define the same hermitian structure if and only if $R'=UR$ with $U\in U(p)$.

\noindent The fibers of this mapping correspond to positive hermitian structures $J_1=R_1^{-1}J_0R_1,\, J_2=R_2^{-1}J_0R_2$, which define relative equilibria 
whose angular momenta are conjugated under complex isomorphisms from $(E,J_1)$ to $(E,J_2)$, that is: there exists $U\in U(p)$ such that 
$$R_1C_1R_1^{-1}=UR_2C_2R_2^{-1}U^{-1}.$$

\noindent{\bf Explicit formul\ae.}
Let $R\in SO(2p)$ be a rotation and
$$\vec h_i=R^{-1}(\vec e_{i}),\;\vec k_i=R^{-1}(\vec e_{p+i}),\quad i=1,\cdots,p,$$ 
be the {\it line} vectors of the matrix which represents $R$ in the orthonormal basis $\{\vec e_1,\cdots,\vec e_{2p}\}$ of $E\equiv\R^{2p}$. The coefficient at line $i$ and column $j$ of the Gram matrix $RS_0R^{-1}$ is  $<R^{-1}(\vec e_i), R^{-1}(\vec e_j)>_{S_0}$,  where the scalar product $<. ,. >_{S_0}$ defined by the symmetric matrix $S_0$ is possibly degenerate. 
Hence the real matrix $RCR^{-1}=\omega(J_0RS_0R^{-1}+RS_0R^{-1}J_0)$ is of the form
$$RCR^{-1}=\omega\begin{pmatrix}
A&-B\\B&A
\end{pmatrix},$$ where
$A$ and $B$ are $p\times p$ matrices whose coefficients $a_{ij}$ and $b_{ij}$ are respectively
$$a_{ij}=<\vec h_i,\vec k_j>_{S_0}-<\vec k_i,\vec h_j>_{S_0},\;\; b_{ij}=<\vec h_i,\vec h_j>_{S_0}+<\vec k_i,\vec k_j>_{S_0}.$$
Considered as a $J_0$-skew-hermitian matrix $\omega(A+iB)$, its coefficients are
$$c_{ij}=\omega(a_{ij}+ib_{ij})=i\omega <\vec h_i+i\vec k_i,\overline{\vec h_j+i\vec k_j}>_{S_0}.$$
Hence, the coefficients $\sigma_{ij}$ of the $J_0$-hermitian matrix $\Sigma=B-iA$ are given by
$$\sigma_{ij}=\frac{1}{i\omega}c_{ij}=<\vec h_i+i\vec k_i,\overline{\vec h_j+i\vec k_j}>_{S_0}.$$
\smallskip

\begin{lemma} The trace of the complex hermitian matrix $\Sigma={\mathcal F}(J)$ is equal to the trace of the real symmetric matrix $S_0$:
$$\nu_1+\nu_2+\cdots+\nu_p=\hbox{trace}\; S_0.$$
\end{lemma}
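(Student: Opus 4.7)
The plan is to compute the trace directly from the explicit formulas given just above the lemma. From the expression
$\sigma_{ij}=\langle\vec h_i+i\vec k_i,\overline{\vec h_j+i\vec k_j}\rangle_{S_0}$,
the diagonal entries are
$\sigma_{ii}=\langle\vec h_i+i\vec k_i,\vec h_i-i\vec k_i\rangle_{S_0}$,
and since $\langle\cdot,\cdot\rangle_{S_0}$ is symmetric the imaginary cross terms $\pm i\langle\vec h_i,\vec k_i\rangle_{S_0}$ cancel, leaving $\sigma_{ii}=\langle\vec h_i,\vec h_i\rangle_{S_0}+\langle\vec k_i,\vec k_i\rangle_{S_0}$. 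This is of course consistent with $\Sigma$ being $J_0$-hermitian (real diagonal).

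The key observation is that the $2p$ vectors $\{\vec h_1,\ldots,\vec h_p,\vec k_1,\ldots,\vec k_p\}=\{R^{-1}\vec e_1,\ldots,R^{-1}\vec e_{2p}\}$ form an orthonormal basis of $\R^{2p}$. Summing the diagonal entries therefore gives
$$\sum_{i=1}^p\sigma_{ii}=\sum_{k=1}^{2p}\langle R^{-1}\vec e_k,R^{-1}\vec e_k\rangle_{S_0}=\sum_{k=1}^{2p}\vec e_k^{\,t}\,(RS_0R^{-1})\,\vec e_k=\mathrm{trace}(RS_0R^{-1})=\mathrm{trace}(S_0).$$
Since the $\nu_i$ are, by definition, the eigenvalues of the $J_0$-hermitian matrix $\Sigma$, we conclude $\nu_1+\cdots+\nu_p=\mathrm{trace}(S_0)$.

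A completely equivalent shortcut bypasses the explicit formulas: viewed as a $2p\times 2p$ real matrix, $\Sigma=J_0^{-1}(RS_0R^{-1})J_0+RS_0R^{-1}$ has real trace equal to $2\,\mathrm{trace}(S_0)$ (each summand contributes $\mathrm{trace}(S_0)$ by cyclicity), and because $\Sigma$ is $J_0$-hermitian with real spectrum $\nu_1,\ldots,\nu_p$ (each appearing twice as a real eigenvalue of the underlying $2p\times 2p$ matrix), this real trace is also $2\sum_i\nu_i$. Dividing by two yields the claim. There is no real obstacle here — as the author himself notes, the identity is trivial; the only point requiring slight care is the sign/conjugation bookkeeping in the explicit formula for $\sigma_{ii}$, which is resolved by using symmetry of $S_0$.
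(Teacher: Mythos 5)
Your proof is correct and takes essentially the same approach as the paper: your ``shortcut'' (real trace of the $2p\times2p$ matrix $\Sigma$ equals $2\,\mathrm{trace}\,S_0$ by cyclicity and also $2\sum\nu_i$ by the doubled spectrum) is exactly the paper's main argument, while your explicit-formula computation of $\sum_i\sigma_{ii}$ over the orthonormal basis $\{\vec h_i,\vec k_i\}$ is the alternative the paper alludes to with ``One can also use the explicit formula.''
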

\begin{proof} 
The trace of the real symmetric matrix $\Sigma$ is equal on the one hand to $2\, trace S_0$, on the other hand to  $2(\nu_1+\cdots+\nu_p)$ because  its spectrum is $\nu_1,\nu_1,\nu_2,\nu_2,\cdots,\nu_p,\nu_p$. One can also use the explicit formula.
\end{proof}
\smallskip

{\it In the sequel we fix an orthonormal basis of $E$ in which $S_0$ is diagonal, 
$$S_0=diag(\sigma_1,\sigma_2,\cdots,\sigma_{2p}),$$}
which means that the coordinate axes are in the directions of axes of the inertia ellipsoid of $x_0$:
\smallskip

\noindent {\bf Question.} Understand the image of ${\mathcal F}$.  The question starts being non trivial when $p\ge 2$ as, if $p=1$, $\Lambda^2E\equiv \R$, the hermitian structure is unique up to orientation and any non zero value of the angular momentum is attained with a configuration of the form $\lambda x_0$. 
\smallskip

\noindent{\bf Remarks.} {\bf 1)} Our problem is the one of understanding the periodic motions of a rigid body which moves freely around its center of mass (see \cite{Ar}). Notice that in restriction to this set of periodic motions, the Euler equations become trivial, as they merely state the constancy of the angular momentum in a frame attached to the configuration (the rigid body); equivalently, as $S_0$ commutes with $J^2=-Id$, the angular momentum and the angular velocity commute.
\smallskip

{\bf 2)} The space $U(p)\backslash SO(2p)$ of positive hermitian structures on $(E,\epsilon)$ is of dimension $p(p-1)$. It is the same as the {\it isotropic grassmaniann} and was identified by Elie Cartan with the space of {\it projective pure spinors} (see \cite{LM}). For $p=2$ and $p=3$, it is respectively diffeomorphic to $P_1(\C)$ and $P_3(\C)$. More generally, it is a hermitian symmetric space whose Lusternik-Schnirelman category is $2^{p-1}$. As the adjoint orbit of $J_0$ in the Lie algebra $so(2p)$, it inherits a Kostant-Souriau symplectic form $\omega_J(X,Y)=trace(J[X,Y])$; notice that $U(p)\backslash SO(2p)$ inherits no symplectic structure from its embedding $J\mapsto (x_0,\omega Jx_0)$ into the phase space of the $N$-body problem because its image is contained in the fiber of $x_0$ which is a lagrangian submanifold.

\section{The symmetry group $\Gamma$}

Given two diagonal elements of $O(2p)$,  $$D'=diag(u_1,u_2,\cdots, u_{2p}),\quad  D''=diag(v_1,v_2,\cdots, v_{2p})$$
such that $\det D'\times \det D''=+1$, 
the map
$$R\mapsto f(R)=D'RD''$$
is an involution of $SO(2p)$, which replaces the coefficient $\rho_{ij}$ of a rotation matrix by $u_iv_j\rho_{ij}=\pm \rho_{ij}$.
If moreover we suppose that
$$\exists \eta=\pm 1,\quad D'J_0D'=\eta J_0,$$
which amounts to $u_{i}u_{p+i}=\eta$ for all $i=1,2,\cdots,p$,  
this operation transforms the complex structure $J=R^{-1}J_0R$ into the complex structure $\eta D''JD''$
so that we get a group $\Gamma$ of involutions of $U(p)\backslash SO(2p)$ by choosing the matrices $D'$ and $D''$ of the form
$$D'=\begin{pmatrix}
Id&0\\
0&\eta Id
\end{pmatrix},\quad 
D''=diag(v_1,v_2,\cdots,v_{2p}),\quad \eta^p\det D''=+1,$$
and noticing that the couples $(\eta,D'')$ and $(\eta,-D'')$ define the same involution. The cardinal of $\Gamma$ is one half of the product of 2 (choices for $\eta$) and $2^{2p-1}$ (choices for $D''$ with determinant equal to $\eta^p$), that is
$$\Gamma\equiv \left(\Z/2\Z\right)^{2p-1}.$$

\noindent If, moreover, the orthonormal basis of $E$ which is chosen to identify $E$ with $\R^{2p}$ is such that
the matrix $S_0$ is diagonal, 
we have $D''S_0D''=S_0$. It follows that 
$\Sigma=S_0+J^{-1}S_0J$ is transformed into $D''\Sigma D''$.
and that the mapping $\mathcal{F}=\mathcal{F}_{S_0}$ factorizes through a mapping 
$$\mathcal{F}: U(p)\backslash SO(2p)/\Gamma\ni R\mapsto (\nu_1,\cdots\nu_p)\in W_p^+.$$

\section{Adapted hermitian structures}\label{Adapted} Among positive hermitian structures, a special role is played by structures which are fixed by elements of $\Gamma$ of a special type:
\begin{definition} In an orthonormal basis of $E$ where the inertia matrix $S_0$ is diagonal, an ``adapted hermitian structure" is a positive hermitian structure which can be written (not uniquely) 
$$J_{\rho, P}=P^{-1}\begin{pmatrix}
0&-\rho^{-1}\\
\rho&0
\end{pmatrix}P=R^{-1}J_0R,\quad R=\begin{pmatrix}
\rho&0\\
0&Id
\end{pmatrix}P
,$$
where $\rho\in SO(p)$ and $P\in SO(2p)$ is a signed permutation. 

When $\rho$ can be chosen equal to $Id$, that is when $R$ is a signed permutation, one speaks of a ``basic hermitian structure".
\end{definition} 

\noindent A filtration of the set of adapted structures is obtained by assigning the rotations $\rho$ to belong to the subgroup 
$SO(k_1+1)\times \cdots \times SO(k_r+1)$ of $SO(p)$ formed by the elements which respect the factors of a direct sum decomposition 
$\R^p=\R^{k_1+1}\oplus\cdots\oplus\R^{k_r+1}$ where each factor is generated by elements of the canonical basis of $\R^p$. We shall speak in this case of {\it $(k_1,\cdots,k_r)$-adapted structures.}

\begin{lemma} The signed permutation $P$ being given, the adapted hermitian structures 
$J_{\rho, P}$ are precisely the ones which are fixed by the involution $(-1,D''_P)\in\Gamma$, where
$$D''_P=P\begin{pmatrix}
Id_{p\times p}&0\\
0&-Id_{p\times p}
\end{pmatrix}P^{-1}\; ,$$
The basic hermitian structures are the ones fixed by a subgroup of $\Gamma$ of order $2^p$ consisting of elements $(\eta,D'')$ of the following form: there exists a signed permutation $P$ such that  if the $\tilde v_i$ are the coefficients of the diagonal matrix $PD''P^{-1}$, one has $\eta\tilde v_i\tilde v_{p+i}=+1$ for $i=1,\cdots,p$. 
\end{lemma}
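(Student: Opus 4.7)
The plan is to reduce both parts to explicit block matrix computations by conjugating with the signed permutation $P$, which turns $D''_P$ into the transparent block matrix $diag(Id,-Id)$ (and, more generally, $PD''P^{-1}$ into a diagonal matrix with entries $\pm 1$).

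For the first assertion, I would first check the ``if'' direction by direct computation: writing $\tilde J = P J_{\rho, P} P^{-1}$ and $K = diag(Id, -Id)$, one has $\tilde J = \begin{pmatrix} 0 & -\rho^{-1} \\ \rho & 0 \end{pmatrix}$, and a two-line calculation confirms that $-K \tilde J K = \tilde J$, which is exactly the condition $(-1, D''_P)\cdot J_{\rho, P} = J_{\rho, P}$. For the converse, if a positive hermitian structure $J$ is fixed by $(-1, D''_P)$, then $\tilde J = P J P^{-1}$ satisfies $-K \tilde J K = \tilde J$; a direct block computation shows that this forces the diagonal blocks of $\tilde J$ to vanish, i.e.\ $\tilde J = \begin{pmatrix} 0 & B \\ C & 0 \end{pmatrix}$. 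The conditions that $\tilde J$ be antisymmetric and satisfy $\tilde J^2 = -Id$ then give $B^T = -C$ and $BC = -Id$, from which $B \in O(p)$ and $C = -B^{-1}$; setting $\rho = C$ gives the desired form $J = J_{\rho, P}$.

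For the second assertion, an element $(\eta, D'') \in \Gamma$ fixes $J_{Id, P} = P^{-1} J_0 P$ if and only if $\eta\, \tilde D\, J_0\, \tilde D = J_0$, where $\tilde D = P D'' P^{-1}$ is a diagonal matrix with entries $\tilde v_i \in \{\pm 1\}$ (since $P$ is a signed permutation). Expanding this identity in $p\times p$ blocks reduces immediately to the system $\eta\, \tilde v_i\, \tilde v_{p+i} = +1$ for $i = 1, \ldots, p$. For the count, $\eta$ and $\tilde v_1, \ldots, \tilde v_p$ may be chosen freely (yielding $2^{p+1}$ pairs), the remaining $\tilde v_{p+i}$ are then forced by $\tilde v_{p+i} = \eta \tilde v_i$, and quotienting by the identification $(\eta, D'') \sim (\eta, -D'')$ leaves exactly $2^p$ elements. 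The compatibility $\eta^p \det D'' = +1$ required for membership in $\Gamma$ is automatic, as it follows from multiplying the $p$ constraints.

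The main obstacle I anticipate is the orientation bookkeeping at the end of the converse in Part 1: the purely algebraic analysis yields only $\rho \in O(p)$, and one must invoke the positivity of $J$ (i.e.\ that $J$ lies in the $SO(2p)$-orbit of $J_0$ rather than of $-J_0$) to force $\det \rho = +1$. Concretely, this amounts to checking that the factorization $J = R^{-1} J_0 R$ with $R = \begin{pmatrix} \rho & 0 \\ 0 & Id \end{pmatrix} P$ is compatible with $R \in SO(2p)$ only in the orientation-preserving case.
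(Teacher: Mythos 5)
Your proof is correct and follows essentially the same strategy as the paper: conjugate by the signed permutation $P$ to reduce to the case $P=\mathrm{Id}$, then read off the fixed-point condition $\eta D'' J D'' = J$ directly. The paper's version is terser---it states the entrywise vanishing condition $\eta v_i v_j = -1 \Rightarrow a_{ij}=0$ and says ``hence the conclusion''---so your explicit block computation forcing $\tilde J=\bigl(\begin{smallmatrix}0&B\\C&0\end{smallmatrix}\bigr)$, the derivation of $B^{T}=-C$ and $BC=-\mathrm{Id}$ from antisymmetry and $J^2=-\mathrm{Id}$, and the orientation argument pinning $\rho\in SO(p)$ rather than merely $O(p)$ (by insisting $R\in SO(2p)$) supply details the paper leaves implicit.
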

\begin{proof}
The element $(\eta,D''=diag(v_1,v_2,\cdots,v_{2p}))\in \Gamma$ leaves invariant the complex structure $J=(a_{ij})_{1\le i,j\le 2p}$ 
if and only if $\eta D''JD''=J$, that is
$$\forall i,j,\; 1\le i,j\le 2p,\quad  (\eta v_iv_j=-1)\Longrightarrow  (a_{ij}=0),$$
hence the conclusion when $P$ is the Identity. It remains to notice that $J$ is invariant under $(\eta,D''_P)$ if and only if $P^{-1}JP$ is invariant under $(\eta,D''_{Id})$.  
\end{proof}
\smallskip

\noindent More generally, the $(k_1,\cdots, k_r)$-adapted structures are fixed by a subgroup of $\Gamma$ of order $2^r$.

\subsection{Partitions} The (signed) permutation $P$ being fixed, the set of $(k_1,\cdots, k_r)$-adapted structures is determined by a partition of the basis $\{e_1,\cdots,e_{2p}\}$ of $E$ into $2r$ subsets $K_1,L_1,\cdots,K_r,L_r$ such that $K_i$ and $L_i$ are both of cardinal $k_i+1$. Let $E_{K_i}$ (resp. $E_{L_i}$) be the subspace of $E$ generated by the elements of $K_i$ (resp. $L_i$). The $(k_1,\cdots, k_r)$-adapted structures $J$ are the ones which send $E_{K_i}$ onto $E_{L_i}$ for all $i=1,\cdots, r$.

\noindent In particular,  if $P^{-1}(\vec e_i)=\epsilon_i\vec e_{\pi(i)}$, a {\it basic} hermitian stucture is associated with a partition of the set $\{1,2,\cdots,2p\}$ into $p$ pairs 
$$\{K_i,L_i\}=\{\pi(i),\pi(p+i)\},\, i=1,\cdots,p,$$
such that the 2-planes generated by $e_{\pi(i)}, e_{\pi(p+i)}$ are complex lines (there are $\frac{(2p)!}{2^pp!}$ such partitions, that is $1.3.5\cdots (2p-1)$). For example, the basic hermitian structure $J_0$ is associated with the partition $(1,p+1)(2,p+2)\cdots(p,2p)$. 
\smallskip

\noindent On the other hand, the whole manifold of {\it adapted} hermitian structures $J_{\rho,P}$ corresponding to a given permutation $P$ is associated with a partition of the set $\{1,2,\cdots,2p\}$ into two subsets 
$$K_1=\{\pi(1),\cdots,\pi(p)\}\quad\hbox{and}\quad L_1=\{\pi(p+1),\cdots,\pi(2p)\}$$ with $p$ elements, each of these complex structures $J$ sending the $p$-dimensional subspace generated by $e_{\pi(1)},\cdots, e_{\pi(p)}$ onto the orthogonal  subspace generated by $e_{\pi(p+1)},\cdots, e_{\pi(2p)}$. There are $C_{2p}^p=\frac{(2p)!}{p!p!}$ such partitions.
In particular, the adapted hermitian structures of  the form $J_{\rho,Id}$ are associated with the partition $(1,2,\cdots,p)(p+1,p+2,\cdots,2p)$.

\section{The image of the frequency mapping}

\subsection{Adapted structures and Horn problem}

For adapted hermitian structures, we get
$$\Sigma=\begin{pmatrix}
\rho\sigma_-\rho^{-1}+\sigma_+&0\\
0&\rho\sigma_-\rho^{-1}+\sigma_+
\end{pmatrix}\quad\hbox{(real form)},$$
where $\sigma_-$ and $\sigma_+$ are the $p\times p$ diagonal matrices such that
$$PS_0P^{-1}=\begin{pmatrix}
\sigma_-&0\\
0&\sigma_+
\end{pmatrix},$$
that is 
$$\Sigma=\rho\sigma_-\rho^{-1}+\sigma_+\quad\hbox{(complex form)}.$$ 
\smallskip

\noindent If $P^{-1}(\vec e_i)=\epsilon_i\vec e_{\pi(i)}$, that is if the coefficients of $P$ are $p_{ij}=\epsilon_i\delta_{j\pi(i)}$ where $\delta$ is the Kronecker symbol, we have
$$\sigma_-=diag(\sigma_{\pi(1)},\sigma_{\pi(2)},\cdots,\sigma_{\pi(p)}),\;\; \sigma_+=diag(\sigma_{\pi(p+1)},\sigma_{\pi(p+2)},\cdots,\sigma_{\pi(2p)}).$$
In particular, if $J_{Id,P}$ is a basic hermitian structure, the frequencies $\nu_i$ of the associated angular momentum are, up to reordering, the $\sigma_{\pi(i)}+\sigma_{\pi(p+i)}$.
\smallskip

\noindent{\bf Notations.} Let $A_{\sigma_-,\sigma_+}\subset W_p^+$ be the set of ordered spectra of a sum of real symmetric $p\times p$  matrices with spectra 
$\sigma_-$ and $\sigma_+$ respectively (here we identify the diagonal matrices $\sigma_-$ and $\sigma_+$ with the set of their elements, let Bourbaki forgive us).
Let $B_{\sigma_-,\sigma_+}\in W_p^+$ be the ordered spectrum of the diagonal matrix $\sigma_-+\sigma+$.
The images $\mathcal{A}$ and $\mathcal{B}$ under the frequency mapping 
$\mathcal{F}$ of  the adapted  and the basic hermitian structures are respectively 

$$\mathcal{A}=\cup A_{\sigma_-,\sigma_+},\quad \mathcal{B}=\cup B_{\sigma_-,\sigma_+},$$
the union being on all the couples 
$(\sigma_-,\sigma_+)$ such that $\sigma_-\cup\sigma_+=\{\sigma_1,\cdots,\sigma_{2p}\}$.
\smallskip

\noindent Finally, let $\mathcal{A}_{k_1,\cdots ,k_r}$ be the image under $\mathcal{F}$ of the $(k_1,\cdots, k_r)$-adapted hermitian structures (in particular, $\mathcal{A}=\mathcal{A}_{p}$ and $\mathcal{B}=\mathcal{A}_{0,\cdots ,0}$). The tableau which follows  shows the orders of the subgroups $\gamma$ of $\Gamma$, the dimensions of the corresponding invariant subsets of hermitian structures and the generic dimension of their images under $\mathcal{F}$:
$$
\begin{array}[]{|c||c|c|c|c|c||c|c|}\hline
order \;\gamma&2^p&\cdots&2^r&\cdots&2&1\\ \hline
dim.\; inv.&0&\cdots&\frac{(k_1+1)(k_1)}{2}+\cdots+\frac{(k_r+1)(k_r)}{2}&\cdots&\frac{p(p-1)}{2}&p(p-1)\\ \hline
dim. \;image&0&\cdots&p-r&\cdots&p-1&p-1\\
\hline
\end{array}
$$
\smallskip

\begin{proposition} The set $\mathcal{A}$ is a convex polytope. The subsets 
$\mathcal{A}_{k_1k_2\cdots k_r}$ (resp. $\mathcal{B}$) lie in the faces of $\mathcal{A}$ (resp. the vertices of $\mathcal{A}$ located in the interior of $W_p^+$).
\end{proposition}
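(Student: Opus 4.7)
The plan is to reduce the statement to Horn's problem and its refinement in \cite{FFLP}. For any adapted hermitian structure, the complex form derived above reads
$$\Sigma=\rho\sigma_-\rho^{-1}+\sigma_+,\quad \rho\in SO(p),$$
where $(\sigma_-,\sigma_+)$ is the partition of the spectrum $\{\sigma_1,\cdots,\sigma_{2p}\}$ of $S_0$ induced by the signed permutation $P$. For a fixed partition, the set $A_{\sigma_-,\sigma_+}$ of ordered spectra of $\Sigma$ as $\rho$ ranges over $SO(p)$ is exactly the Horn polytope of eigenvalues of sums of Hermitian matrices with prescribed spectra, and is a convex polytope by the Klyachko--Knutson--Tao theorem. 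Hence $\mathcal{A}=\cup_{(\sigma_-,\sigma_+)}A_{\sigma_-,\sigma_+}$ is a priori a finite union of convex polytopes.

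The principal obstacle is to prove that this union is itself a single convex polytope. This is precisely the refined Horn problem treated in \cite{FFLP}: when only the multiset $\mathrm{spec}(A)\cup\mathrm{spec}(B)=\{\sigma_1,\cdots,\sigma_{2p}\}$ is prescribed, rather than the individual spectra of $A$ and $B$ separately, the set of possible ordered spectra of $A+B$ is still a single convex polytope, cut out by explicit linear inequalities. Invoking this result directly yields convexity of $\mathcal{A}$, and at the same time provides a facet description that is exploited in the next step.

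For a $(k_1,\cdots,k_r)$-adapted structure, the restriction $\rho\in SO(k_1+1)\times\cdots\times SO(k_r+1)$ forces $\rho$ to be block-diagonal; since $\sigma_\pm$ are themselves diagonal, $\Sigma$ splits as a direct sum of $r$ independent Horn blocks, the spectrum being obtained by concatenating the block spectra, of generic total dimension $\sum k_i=p-r$ as in the table. Such decompositions $A+B=\oplus_i(A_i+B_i)$ occur precisely when Horn inequalities isolating the block indices are saturated (the standard rigidity principle relating common invariant subspaces of $A,B$ to equality in Horn inequalities), so $\mathcal{A}_{k_1,\cdots,k_r}$ lies on a face of $\mathcal{A}$ of the expected dimension. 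In the extreme case $r=p$ each block is $1\times1$ and necessarily $\rho=Id$, so every basic structure produces the diagonal matrix $\Sigma=\mathrm{diag}\bigl(\sigma_{\pi(i)}+\sigma_{\pi(p+i)}\bigr)$; these are isolated points of $\mathcal{A}$, and when all pairwise sums are distinct the corresponding ordered tuple lies in the interior of $W_p^+$ and is a vertex of $\mathcal{A}$, which is the required assertion.
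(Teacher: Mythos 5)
Your proposal follows essentially the same route as the paper: both rely on the Horn polytope theory (Klyachko, Knutson--Tao, etc.) for the convexity of each $A_{\sigma_-,\sigma_+}$, both invoke \cite{FFLP} to show that the union over partitions is a single polytope, and both use the relation between block decompositions and faces for the second half. A few points are glossed over, though. First, because $\rho\in SO(p)$, the matrices $\rho\sigma_-\rho^{-1}$ and $\sigma_+$ are \emph{real symmetric}, not Hermitian, and the assertion that their possible ordered spectra form the \emph{same} Horn polytope as in the Hermitian case is not automatic: the paper must cite Fulton's Theorem 3 in \cite{F2} for this, and your proof silently assumes it. Second, your sentence ``such decompositions occur precisely when Horn inequalities isolating the block indices are saturated'' is an if-and-only-if that holds in the Hermitian setting (Knutson's theorem) but \emph{fails in the real symmetric one}: the paper explicitly flags that the face characterization in \cite{F2} Theorem 5 only yields simultaneous block-diagonalizability over $\C$, and even quotes Fulton's counterexample; only the implication you actually need (block decomposition $\Rightarrow$ face point) survives. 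Third, calling the images of basic structures ``isolated points of $\mathcal{A}$'' is a slip---they are (generically) vertices, hence boundary points, not isolated in the polytope $\mathcal{A}$. Finally, your reading of \cite{FFLP} as a general statement about the ``multiset-spectrum Horn problem'' is a reasonable gloss, but the precise result the paper uses is Proposition 2.2 of \cite{FFLP}, which says that for any $A,B$ one can replace them by $\tilde A,\tilde B$ with the interlaced spectra $\{\sigma_1,\sigma_3,\dots\}$, $\{\sigma_2,\sigma_4,\dots\}$ and the same sum; the identification $\mathcal{A}=A_{\sigma_-^{\mathrm{alt}},\sigma_+^{\mathrm{alt}}}$ is the clean consequence, and making this explicit would tighten your argument.
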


\begin{proof}
Thanks to the works of Weyl, Horn, Atiyah, Guillemin \& Sternberg, Kirwan, Klyachko, Knutson \& Tao, Fulton,$\hdots$, the subsets $A_{\sigma_-,\sigma_+}$, which can be described as the images of moment maps, are well understood. In particular they are convex polytopes. More precisely, we have:
\begin{theorem}[see\cite{K}] Let $\mathcal{O}_\lambda,\mathcal{O}_\mu$ be the spaces of Hermitian matrices with spectrum $\lambda,\mu$. Let $e:\mathcal{O}_\lambda\times\mathcal{O}_\mu\to\R^n$ take a pair of matrices to the spectrum of their sum, listed in decreasing order. Then the image of $e$ is a convex polytope. Also, if $e(H_\lambda,H_\mu)$ is on a face of the image polytope and is also a strictly decreasing list, then $H_\lambda,H_\mu$ are simultaneously block diagonalisable.
\end{theorem}
We need a version of this theorem which applies to the real symmetric matrices. Such a version exists, at least for the first part asserting that the image is a convex polytope, indeed the same as in the hermitian case (see \cite{F2} Theorem 3). The characterization of the faces, nevertheless, holds only in the sense that the matrices need be simultanously block diagonalisable only in the complex domain (\cite{F2} Theorem 5 and the counter-example which follows the statement of the theorem).
\goodbreak

\smallskip

\noindent It remains to notice that the union $\mathcal{A}$ of the convex polytopes $\mathcal{A}_{\sigma_-,\sigma_+}$ corresponding to the various partitions 
$\{\sigma_1,\cdots,\sigma_{2p}\}=\sigma_-\cup\sigma_+$ forms itself a convex polytope whose extremal points not on the boundary of the Weyl chamber (i.e. corresponding to strictly decreasing sets of eigenvalues) belong to ${\mathcal{B}}$.  Indeed, the following is true:
\begin{theorem}[\cite{FFLP} Proposition 2.2]  Let $A$ and $B$ be $p\times p$ Hermitian matrices. Let  $\sigma_1\ge\sigma_2\ge\cdots\ge\sigma_{2p}$ be the eigenvalues of $A$ and $B$ arranged in descending order. Then there exist Hermitian matrices $\tilde A$ and $\tilde B$ with eigenvalues $\sigma_1\ge\sigma_3\ge\cdots\ge\sigma_{2p-1}$ and $\sigma_2\ge\sigma_4\ge\cdots\ge\sigma_{2p}$ respectively, such that $\tilde A+\tilde B=A+B$.
\end{theorem}

Hence, if $\sigma_1\ge\sigma_2\ge\cdots\ge\sigma_{2p}$, the set $\mathcal{A}$ coincides with $A_{\sigma_-,\sigma_+}$, where 
$$\sigma_-=\{\sigma_1\ge\sigma_3\ge\cdots\ge\sigma_{2p-1}\},\quad \sigma_+=\{\sigma_2\ge\sigma_4\ge\cdots\ge\sigma_{2p}\}.$$ 
\end{proof}
\subsection{Some questions and a conjecture}

\noindent{\bf QUESTION 1}: Is there a big polytope in some product space whose different projections give the various $A_{\sigma_-,\sigma_+}$ ?
In a sense, the conjecture below would give a possible answer.
\smallskip

\noindent{\bf QUESTION 2}: Is the image of $\mathcal{F}$ itself the same as the image of some moment map ? (the answer is probably ``no")
\smallskip

\noindent{\bf QUESTION 3}: What is the union over all the central configurations $x_0$ with 
moment of inertia $I(x_0)=\hbox{trace}\; S_0=1$ of the images of $\mathcal{F}_{S_0}$~? (here, one can either fix the number $N$ of bodies or let it be arbitrary).
\smallskip

\noindent{\bf CONJECTURE}\quad  {\it $Im\mathcal{F}$ coincides with the convex polytope $\mathcal{A}$.} 
\smallskip

\noindent This is equivalent to asserting that every level of $\mathcal{F}$ meets some adapted hermitian structure i.e. to proving that for every such level, the action of at least one of the involutions $(-1,D''_P)$ has a fixed point inside this level.  
\smallskip

In what follows, we prove the conjecture in the simple case $p=2$ (the case $p=1$ is  trivial) and 
give a picture of $\mathcal{A}$ in a case with $p=3$.

\goodbreak

\section{The case of $N$ bodies in $E=\R^4$}

\subsection{The trivial case $E=\R^2$} As $SO(2)=U(1)$, the image of the frequency map is the single point $\sigma_1+\sigma_2$.

\subsection{A parametrization of $SO(4)/U(2)$}
Each rotation $R\in SO(4)$ is equivalent, modulo left multiplication by an element of  $U(2)$, to a rotation of the form

$$R(\varphi,\theta)=\begin{pmatrix}
\cos\varphi&-\sin\varphi\cos\theta&\sin\varphi\sin\theta&0\\
\sin\varphi&\cos\varphi\cos\theta&-\cos\varphi\sin\theta&0\\
0&\sin\theta&\cos\theta&0\\
0&0&0&1
\end{pmatrix}.
$$
Indeed, there exists a unique element $U\in SU(2)$ such that $UR(\vec e_4)=\vec e_4$ and a rotation ${\hat U}\in U(2)$ of the $\{\vec e_1,\vec e_3\}$ plane which sends  $UR(\vec e_1)$ on a vector in the plane $\{\vec e_1,\vec e_2\}$. It follows that ${\hat U}UR$ takes the form of $R(\varphi,\theta)$. The mapping 
\begin{equation*}
\begin{split}
R\mapsto -J\vec e_4&=-R^{-1}J_0R\vec e_4
=-R(\varphi,\theta)^{-1}J_0R(\varphi,\theta)\vec e_4
=R(\varphi,\theta)^{-1}\vec e_2\\
&=\vec h_2=(\sin\varphi,\cos\varphi\cos\theta,-\cos\varphi\sin\theta,0)\in S^2\subset\R^3
\end{split}
\end{equation*} 
factors through a bijection from $SO(4)/U(2)$ to the 2-sphere $S^2\subset \R^3$ (compare to McDuff-Salamon). 
The mapping
$(\varphi,\theta)\mapsto R(\varphi,\theta)^{-1}\vec e_3$ is nothing but a version of ``spherical coordinates" on $S^2$ 
which explode at $\varphi=\pi/2\mod\pi$.
The corresponding complex structure $J(\varphi,\theta)=R(\varphi,\theta)^{-1}J_0R(\varphi,\theta)$ is
$$J(\varphi,\theta)=\begin{pmatrix}
0&-\cos\varphi\sin\theta&-\cos\varphi\cos\theta&-\sin\varphi\\
\cos\varphi\sin\theta&0&\sin\varphi&-\cos\varphi\cos\theta\\
\cos\varphi\cos\theta&-\sin\varphi&0&\cos\varphi\sin\theta\\
\sin\varphi&\cos\varphi\cos\theta&-\cos\varphi\sin\theta&0
\end{pmatrix}.
$$

\subsection{The image of $\mathcal{F}$}

{\it As above, we choose an orthonormal basis of $(E,\epsilon)$ such that} 
$$S_0=diag(\sigma_1,\sigma_2,\sigma_3,\sigma_4).$$
The hermitian endomorphism $\Sigma:\C^2\to\C^2$ is represented by the complex  $2\times 2$ matrix
$$\begin{pmatrix}
c_{11}&c_{12}\\
c_{21}&c_{22}
\end{pmatrix}=
\begin{pmatrix}
(||\vec h_1||^2_{S_0}+||\vec k_1||^2_{S_0})&<\vec h_1+i\vec k_1,\overline{\vec h_2+i\vec k_2}>_{S_0}\\
<\vec h_2+i\vec k_2,\overline{\vec h_1+i\vec k_1}>_{S_0}&(||\vec h_2||^2_{S_0}+||\vec k_2||^2_{S_0})
\end{pmatrix},
$$
hence
$$\nu_1,\nu_2=\left(\frac{1}{2}I(x_0)\pm \sqrt{\delta}\right),$$ with
\begin{equation*}
\begin{split}
\delta&=(||\vec h_1||_{S_0}^2+||\vec k_1||_{S_0}^2-||\vec h_2||_{S_0}^2-||\vec k_2||_{S_0}^2)^2\\
&+4(<\vec h_1,\vec h_2>_{S_0}+<\vec k_1,\vec k_2>_{S_0})^2
+4(<\vec k_1,\vec h_2>_{S_0}-<\vec h_1,\vec k_2>_{S_0})^2.
\end{split}
\end{equation*}
As $\nu_1+\nu_2=I(x_0)$, it is enough to compute 
\begin{equation*}
\begin{split}
\nu_1\nu_2&=f(\varphi,\theta)=\det\Sigma=
(||\vec h_1||^2_{S_0}+||\vec k_1||^2_{S_0})(||\vec h_2||^2_{S_0}+||\vec k_2||^2_{S_0})\\
&-|<\vec h_1,\vec h_2>_{S_0}+<\vec k_1,\vec k_2>_{S_0}|^2-|<\vec k_1,\vec h_2>_{S_0}-<\vec h_1,\vec k_2>_{S_0}|^2,
\end{split}
\end{equation*}
defined on the 2-sphere $SO(4)/U(2)$ endowed with the polar coordinates $\theta\in[0,2\pi]$ and $\varphi\in[-\pi/2,\pi/2]$.
An orthonormal basis $\{\vec e_1,\vec e_2,\vec e_3,\vec e_4\}$ of $\R^4$ being chosen such that 
$S_0=diag(\sigma_1,\sigma_2,\sigma_3,\sigma_4)$, the partitions $(12)(34),\, (13)(24)$ and $(14)(23)$ of $\{1,2,3,4\}$ into two equal subsets give rise to three great circles $\theta=0\mod\pi,\, \theta=\pi/2\mod\pi$ and $\varphi=0$ of adapted hermitian structures $J_{\rho, P}$, with respectively 

\begin{equation*}
\begin{split}
&P=\begin{pmatrix}
1&0&0&0\\
0&1&0&0\\
0&0&1&0\\
0&0&0&1
\end{pmatrix},\quad 
P=\begin{pmatrix}
1&0&0&0\\
0&0&-\epsilon&0\\
0&\epsilon&0&0\\
0&0&0&1
\end{pmatrix},
\quad \hbox{and}\quad 
P=\begin{pmatrix}
0&0&1&0\\
0&1&0&0\\
-1&0&0&0\\
0&0&0&1
\end{pmatrix},\\
&\rho=\begin{pmatrix}
\epsilon\cos\varphi&-\sin\varphi\\
\sin\varphi&\epsilon\cos\varphi
\end{pmatrix},\quad 
\rho=\begin{pmatrix}
\cos\varphi&-\sin\varphi\\
\sin\varphi&\cos\varphi
\end{pmatrix},\quad\hbox{and}\quad 
\rho=\begin{pmatrix}
\cos\theta&\sin\theta\\
-\sin\theta&\cos\theta
\end{pmatrix},
\end{split}
\end{equation*} 
with, in the first case, $\epsilon=+1$ (resp. $-1$) if $\theta=0$ (resp. $\pi$) and in the second, 
$\epsilon=+1$ (resp. $-1$) if $\theta=\pi/2$ (resp. $3\pi/2$). Let
$$J_0=\begin{pmatrix}
0&0&-1&0\\
0&0&0&-1\\
1&0&0&0\\
0&1&0&0
\end{pmatrix},\;
J_1=\begin{pmatrix}
0&1&0&0\\
-1&0&0&0\\
0&0&0&-1\\
0&0&1&0
\end{pmatrix},\;
J_2=\begin{pmatrix}
0&0&0&-1\\
0&0&1&0\\
0&-1&0&0\\
1&0&0&0
\end{pmatrix}.
$$
The intersections of the circles are $J(0,0)=J_0,\;  J(0,\pi)=-J_0,\;  J(0,3\pi/2)=J_1,\;$  $J(0,\pi/2)=-J_1,\;  J(\pi/2,\theta)=J_2,\; 
J(-\pi/2,\theta)=-J_2$. 
\medskip

Let us call {\it critical complex structures} the critical points of $f$
\begin{lemma}
If the inertial eigenvalues $\sigma_1,\sigma_2,\sigma_3,\sigma_4$ are pairwise distinct, the critical complex structures are $\pm J_0, \pm J_1, \pm J_2$, that is the ones forced by the symmetries of $f$;  the corresponding critical values are respectively 
$$(\sigma_1+\sigma_3)(\sigma_2+\sigma_4),\quad (\sigma_1+\sigma_2)(\sigma_3+\sigma_4),\quad  (\sigma_1+\sigma_4)(\sigma_2+\sigma_3).$$
\end{lemma}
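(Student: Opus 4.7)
The six points $\pm J_0, \pm J_1, \pm J_2$ are fixed by sufficiently many involutions in $\Gamma$ to be automatically critical, so the real content of the lemma is the assertion that no other critical points exist. My strategy is to reduce $f=\det\Sigma$ to a quadratic form (plus constant) on the 2-sphere $SO(4)/U(2)\cong S^2$, after which its critical points become unit eigenvectors of a symmetric $3\times 3$ matrix and can be counted directly.

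The key observation is that the last row of $R(\varphi,\theta)$ is $(0,0,0,1)$, so $\vec k_2 = \vec e_4$. Consequently $||\vec k_2||_{S_0}^2 = \sigma_4$, $<\vec k_2, \vec v>_{S_0} = 0$ for every $\vec v$ in $\R^3 := \mathrm{span}(\vec e_1, \vec e_2, \vec e_3)$, and $\{\vec h_1, \vec h_2, \vec k_1\}$ is an orthonormal basis of this $\R^3$. Writing $T=S_0|_{\R^3}$ and $S=\mathrm{tr}\,T=\sigma_1+\sigma_2+\sigma_3$, two Parseval-type identities for the basis $\{\vec h_1,\vec h_2,\vec k_1\}$ apply: the sum of squared $S_0$-norms equals $S$, and expanding $T\vec h_2$ in the same basis yields
$$<\vec h_1,\vec h_2>_{S_0}^2 + <\vec k_1,\vec h_2>_{S_0}^2 = ||T\vec h_2||^2 - <\vec h_2,\vec h_2>_{S_0}^2.$$
Substituting both into $f=c_{11}c_{22}-|c_{12}|^2$ as given in the explicit formula for $\Sigma$, the $<\vec h_2,\vec h_2>_{S_0}^2$ contributions cancel and a short manipulation gives
$$f(\vec h_2)=<\vec h_2, Q\vec h_2> + S\sigma_4, \qquad Q = (S-\sigma_4)T - T^2.$$

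Since $J\mapsto -J\vec e_4=\vec h_2$ is a diffeomorphism $SO(4)/U(2)\to S^2\subset \R^3$, the critical points of $f$ are exactly the unit eigenvectors of the symmetric matrix $Q$. In the basis $\{\vec e_1, \vec e_2, \vec e_3\}$, $Q$ is diagonal with entries $Q_i = \sigma_i(S-\sigma_4-\sigma_i)$, whose pairwise differences factor as $Q_i - Q_j = (\sigma_i-\sigma_j)(\sigma_k-\sigma_4)$ for $\{i,j,k\}=\{1,2,3\}$. The pairwise-distinctness hypothesis makes all three differences nonzero, so $Q$ has simple spectrum and the critical set is exactly $\pm \vec e_1, \pm\vec e_2, \pm\vec e_3$. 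Reading $\vec h_2=(\sin\varphi, \cos\varphi\cos\theta, -\cos\varphi\sin\theta, 0)$ off the parametrization matches these six vectors with $\pm J_2, \pm J_0, \pm J_1$ respectively, and the critical values $Q_i + S\sigma_4$ factor in one line into the announced products $(\sigma_1+\sigma_4)(\sigma_2+\sigma_3)$, $(\sigma_1+\sigma_3)(\sigma_2+\sigma_4)$, $(\sigma_1+\sigma_2)(\sigma_3+\sigma_4)$.

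The only non-obvious step is the reduction: $f$, a priori depending on the full orthonormal triple $\{\vec h_1,\vec h_2,\vec k_1\}$, actually collapses to a quadratic form in $\vec h_2$ alone. This collapse works precisely because $\vec k_2=\vec e_4$, a feature built into the $(\varphi,\theta)$-parametrization of $SO(4)/U(2)$; once the quadratic-form structure is recognized, the remainder is linear algebra on $\R^3$ and no real obstacle is anticipated.
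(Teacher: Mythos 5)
Your proof is correct, and it takes a genuinely different route from the paper's. The paper proves the lemma by a brute-force computation of $\partial f/\partial\varphi$ and $\partial f/\partial\theta$ in the $(\varphi,\theta)$-chart, obtaining explicit factored expressions from which the critical set and critical values are read off directly. You instead exploit the fact, built into the parametrization, that $\vec k_2=\vec e_4$, which forces the orthonormal triple $\{\vec h_1,\vec h_2,\vec k_1\}$ to span $\mathrm{span}(\vec e_1,\vec e_2,\vec e_3)$; two trace/Parseval identities then collapse $f$ to the quadratic expression $\langle \vec h_2, Q\vec h_2\rangle + S\sigma_4$ with $Q=(S-\sigma_4)T-T^2$, and the classical characterization of critical points of a quadratic form on $S^2$ finishes the count. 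I checked the algebra: the cancellation of the $\langle\vec h_2,\vec h_2\rangle_{S_0}^2$ term goes through, $Q_i-Q_j=(\sigma_i-\sigma_j)(\sigma_k-\sigma_4)$ is correct, and $Q_i+S\sigma_4$ indeed factors as the products in the statement (with the matching $\vec e_1\leftrightarrow\pm J_2$, $\vec e_2\leftrightarrow\pm J_0$, $\vec e_3\leftrightarrow\pm J_1$ consistent with the paper's list of $J(\varphi,\theta)$ at the circle intersections). What your approach buys is structural clarity: it explains \emph{why} there are exactly six critical points (simple spectrum of a $3\times 3$ symmetric matrix) and makes the degenerate cases transparent (a repeated $\sigma_i$ or $\sigma_k=\sigma_4$ gives a repeated eigenvalue of $Q$, hence a circle of critical points), whereas the paper's computation gives the same conclusion but hides the underlying linear algebra. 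The paper's derivative computation has the modest advantage of producing, as a by-product, the factored form of $\partial f/\partial\theta$ used to identify the symmetry circles.
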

\begin{proof}
A direct computation gives the following expressions for the derivatives of the function $f(\varphi,\theta)$:
\begin{equation*}
\begin{split}
\frac{\partial f}{\partial \varphi}
&=2\sin\varphi\cos\varphi D_\varphi,\quad\hbox{with}\\
D_\varphi&=\bigl(\sigma_1-(\sigma_2\cos^2\theta+\sigma_3\sin^2\theta)\bigr)(\sigma_2\sin^2\theta+\sigma_3\cos^2\theta-\sigma_4)\\
&+(\sigma_2-\sigma_3)^2\cos^2\theta\sin^2\theta,\\
\frac{\partial f}{\partial\theta}&=2(\sigma_2-\sigma_3)(\sigma_4-\sigma_1)\cos\theta\sin\theta\cos^2\varphi.
\end{split}
\end{equation*}
It follows that the critical points are given by the equations 
$$1)\; \cos\varphi=0\quad\quad\hbox{or}\quad\quad 2)\;  \sin\varphi=0\;\hbox{and}\; \sin\theta\cos\theta=0,$$
from which the computation of the critical $J$'s is explicit as well as the computation of the critical values.
\smallskip

Finally, one checks that the symmetry $(-1,D''_P)$ correspond in each of the three cases to the invariance  of $f(\varphi,\theta)$ under the symmetry with respect to the corresponding great circle, that is

$(\varphi,\theta)\mapsto (\varphi,-\theta)$ in the first case, 

$(\varphi,\theta)\mapsto (\varphi,\pi-\theta)$ in the second case,

$(\varphi,\theta)\mapsto (-\varphi,\theta)$ in the third case.

Each critical point is a complex structure invariant under the action of a subgroup of order 4 of $\Gamma$, generated by the symmetries with respect to two orthogonal great circles, and hence is {\it forced by the symmetries of $f$.}
This ends the proof of the lemma.
\end{proof}

\begin{corollary} The conjecture is true when $p=2$: the image of the frequency map $\mathcal{F}$ coincides with the image $\mathcal{A}$ of the adapted hermitian structures.
\end{corollary}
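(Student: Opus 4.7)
The plan is to reduce the corollary to a statement about the image of the single real-valued function $f(\varphi,\theta)=\det\Sigma$ on $SO(4)/U(2)\cong S^2$. By the preceding Lemma (the trace identity $\nu_1+\nu_2=\mathrm{trace}\,S_0=I(x_0)$), both $\mathrm{Im}\,\mathcal{F}$ and $\mathcal{A}$ lie on the affine segment $\{\nu_1+\nu_2=I(x_0),\ \nu_1\geq\nu_2\}$ of $W_2^+$ and are completely determined by the range of $\nu_1\nu_2=f$. It therefore suffices to prove that $f(S^2)=f(\text{adapted structures})$.

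The sphere $S^2$ is compact and connected, hence $f(S^2)$ is a closed interval $[f_{\min},f_{\max}]$ whose endpoints are attained at critical points of $f$. Assuming the $\sigma_i$ are pairwise distinct, the preceding Lemma identifies these critical points as exactly $\pm J_0,\pm J_1,\pm J_2$, with critical values $\alpha=(\sigma_1+\sigma_3)(\sigma_2+\sigma_4)$, $\beta=(\sigma_1+\sigma_2)(\sigma_3+\sigma_4)$, $\gamma=(\sigma_1+\sigma_4)(\sigma_2+\sigma_3)$. In particular $\{f_{\min},f_{\max}\}\subseteq\{\alpha,\beta,\gamma\}$.

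All six critical structures are basic, hence adapted. The key combinatorial observation is that the three great circles of adapted structures distribute these six critical points by pairs: $\{\theta\equiv 0\bmod\pi\}$ contains $\pm J_0$ and $\pm J_2$ (values $\alpha$ and $\gamma$), $\{\theta\equiv\pi/2\bmod\pi\}$ contains $\pm J_1$ and $\pm J_2$ (values $\beta$ and $\gamma$), and $\{\varphi=0\}$ contains $\pm J_0$ and $\pm J_1$ (values $\alpha$ and $\beta$). Since each great circle is connected, $f$ restricted to it is a closed interval joining its two critical values. The union of the three intervals $[\min(\alpha,\gamma),\max(\alpha,\gamma)]$, $[\min(\beta,\gamma),\max(\beta,\gamma)]$, $[\min(\alpha,\beta),\max(\alpha,\beta)]$ automatically covers $[\min(\alpha,\beta,\gamma),\max(\alpha,\beta,\gamma)]=[f_{\min},f_{\max}]$, so together with the trivial reverse inclusion we conclude $\mathrm{Im}\,\mathcal{F}=\mathcal{A}$ in the generic case.

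The main obstacle I anticipate is the non-generic case where some $\sigma_i$ coincide: the Lemma's critical-point classification can acquire additional critical manifolds, and one cannot invoke it directly. The natural remedy is a density argument. Perturb $S_0$ to a nearby diagonal $S_0^\varepsilon$ with pairwise distinct eigenvalues, apply the generic conclusion to obtain $\mathrm{Im}\,\mathcal{F}^\varepsilon=\mathcal{A}^\varepsilon=[\min(\alpha^\varepsilon,\beta^\varepsilon,\gamma^\varepsilon),\max(\alpha^\varepsilon,\beta^\varepsilon,\gamma^\varepsilon)]$, and pass to the limit $\varepsilon\to 0$ using the uniform convergence of $f^\varepsilon$ to $f$ on the compact sphere; Hausdorff continuity of the images of continuous functions on a fixed compact domain then transfers the equality to $S_0$.
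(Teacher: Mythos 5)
Your proof is correct and is essentially the paper's intended argument, which is left implicit (the Corollary is stated with no separate proof, right after the critical--point Lemma): compactness and connectedness of $S^2\cong SO(4)/U(2)$ give $f(S^2)=[f_{\min},f_{\max}]$, the extremes are critical values, the critical points $\pm J_0,\pm J_1,\pm J_2$ are basic hence adapted, and since the three great circles of adapted structures link these points in all three possible pairs, $f$ restricted to adapted structures already fills the whole interval by the intermediate value theorem. The only place you go beyond the paper is the density argument for repeated $\sigma_i$'s; this is a real (if small) gap in the paper, since the Lemma classifying critical points assumes pairwise distinct eigenvalues, and your Hausdorff--continuity step correctly closes it.
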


\noindent{\bf Remarks.} 
{\bf 1)}  As the product of the symmetries with respect to the three planes $xOy,yOz,zOx$ induces the antipody on $S^2$, the function $f$ is defined on $P_2(\R)$.
\smallskip

{\bf 2)} Let us suppose that the configuration $x_0$ is 3 dimensional and that 
$$\sigma_1>\sigma_2>\sigma_3>\sigma_4=0.$$ The critical values of $\det\Sigma$ are ordered as follows:
$$
(\sigma_2+\sigma_3)\sigma_1>
(\sigma_1+\sigma_3)\sigma_2>
(\sigma_1+\sigma_2)\sigma_3.
$$
or, as $\sigma_1+\sigma_2+\sigma_3=I(x_0)$, 
$$(I(x_0)-\sigma_1)\sigma_1>(I(x_0)-\sigma_2)\sigma_2>(I(x_0)-\sigma_3)\sigma_3.$$
The maximum value of $\nu_1\nu_2=\det\Sigma$ is attained when $\sigma_1=I(x_0)/2$ maximizes the function $(I(x_0)-\sigma)\sigma$. The frequencies $\nu_1, \nu_2$ of the angular momentum are then equal, which means that the angular momentum is conjugated to a multiple of $J_0$. 

An example of this is given by an equal mass equilateral triangle in the plane generated by $\vec e_1$ and $\vec e_2$ rotating under any complex structure of the form $J=(\sin\varphi)J_0+(\cos\varphi )J_2$, that is such that $J\vec e_1$ belongs to the plane generated by $\vec e_3$ and $\vec e_4$. 

Another example is the regular tetrahedron in $\Z/{3\Z}$ symmetrix position 
with masses $(\mu,\mu,\mu,3\mu)$
mentioned in \cite{C2}. 
\medskip

\hspace{-1.25cm}
\includegraphics[scale=1]{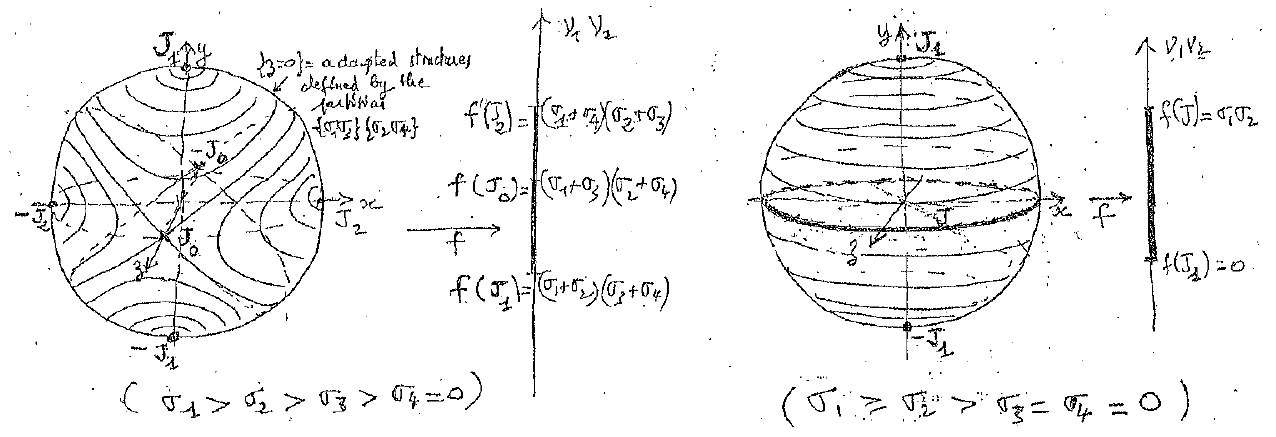}

Notice that nothing changes qualitatively in the first figure if $\sigma_4>0$ is small.
\smallskip

{\bf 3) The round cases.} 
If the ellipsoid of inertia is a round ball, i.e. if $S_0=\sigma Id$, the angular momentum of all the rotational motions in $\R^4$ will be the same up to rotation. Indeed,
in this case, $\mathcal{C}_0=2\omega\sigma J_0$.
In the same vein, we have the
\begin{lemma}
Let $x_0$ be a 3 dimensional central configuration of $N$ bodies and $\mu$ be masses such that 
$S_0=diag (\sigma,\sigma,\sigma,0)$. Whatever be the relative equilibrium motion in $\R^4$ of $x_0$ with such masses, the angular momentum endomorphism $\mathcal{C}$ has eigenvalues $\pm\frac{2i\omega}{3}I(x_0),\pm\frac{i\omega}{3}I(x_0)$.
\end{lemma}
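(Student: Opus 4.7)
The strategy is to exhibit, for any hermitian structure $J$ on $\R^4$, a canonical $J$-invariant orthogonal splitting $\R^4=E_0\oplus E_0^{\perp}$ on which the matrix $\omega^{-1}C=S_0 J+J S_0$ reduces on each summand to a scalar multiple of $J$, with the two scalars being independent of $J$.

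Observe first that $S_0$ is simply $\sigma$ times the orthogonal projection onto $\vec e_4^{\perp}$, so $\ker S_0=\R\vec e_4$ while $S_0$ acts as $\sigma\,\mathrm{Id}$ on $\vec e_4^{\perp}$. Set $\vec f=J\vec e_4$: since $J$ is $\epsilon$-antisymmetric, $\vec f$ is a unit vector with $\vec f\perp\vec e_4$, hence $\vec f\in\vec e_4^{\perp}$. The plane $E_0=\mathrm{span}(\vec e_4,\vec f)$ is $J$-invariant because $J\vec f=-\vec e_4$, and consequently $E_0^{\perp}$ is $J$-invariant too.

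I then compute $\omega^{-1}C$ on each summand separately. The key point is that $E_0^{\perp}\subset\vec e_4^{\perp}$ (because $\vec e_4\in E_0$), so $S_0|_{E_0^{\perp}}=\sigma\,\mathrm{Id}$ commutes with $J|_{E_0^{\perp}}$, giving $\omega^{-1}C|_{E_0^{\perp}}=2\sigma J|_{E_0^{\perp}}$, whose complex eigenvalues are $\pm 2i\sigma$. On $E_0$, a two-line check in the basis $(\vec e_4,\vec f)$ using $S_0\vec e_4=0$, $S_0\vec f=\sigma\vec f$, $J\vec e_4=\vec f$, $J\vec f=-\vec e_4$ yields $\omega^{-1}C|_{E_0}=\sigma J|_{E_0}$, with eigenvalues $\pm i\sigma$.

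Combining, $\mathcal{C}$ has eigenvalues $\pm 2i\omega\sigma$ and $\pm i\omega\sigma$, and using $I(x_0)=\mathrm{trace}\,S_0=3\sigma$ these become $\pm\frac{2i\omega}{3}I(x_0)$ and $\pm\frac{i\omega}{3}I(x_0)$. There is really no hard step here: what forces the answer to be $J$-independent is the observation that the $J$-image of the degenerate direction $\vec e_4$ is automatically pulled into the $\sigma$-eigenspace $\vec e_4^{\perp}$, so that $E_0^{\perp}$ lies entirely inside that eigenspace and $S_0$ takes a canonical block-diagonal shape on $E_0\oplus E_0^{\perp}$ for every choice of $J$.
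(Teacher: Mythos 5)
Your argument is correct, and it takes a genuinely different route from the paper's. The paper exploits the explicit formula $\sigma_{ij}=\langle \vec h_i+i\vec k_i,\,\overline{\vec h_j+i\vec k_j}\rangle_{S_0}$ together with the identity $\langle \vec a,\vec b\rangle_{S_0}=\sigma\bigl(\langle\vec a,\vec b\rangle_\epsilon-a_4b_4\bigr)$: this shows that the Hermitian matrix $\Sigma$ depends on $R$ only through the last column of $R$, and since every class in $SO(4)/U(2)$ admits a representative with last column $(0,0,0,1)$, one may set $R=\mathrm{Id}$ and read off $\Sigma_0=\sigma\,\mathrm{diag}(2,1)$. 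So the paper's proof is a short coordinate computation that leans on the parametrization of $SO(4)/U(2)$ and the formula machinery already in place. Your proof is coordinate-free and operator-theoretic: you identify the canonical $J$-invariant plane $E_0=\ker S_0\oplus J(\ker S_0)$, note that antisymmetry of $J$ forces $J\vec e_4\perp\vec e_4$ so that $E_0^\perp$ sits entirely inside the $\sigma$-eigenspace of $S_0$, and then read off $\omega^{-1}C$ as $2\sigma J$ on $E_0^\perp$ and $\sigma J$ on $E_0$ directly. What this buys you is transparency and independence from the choice of parametrization; it also isolates exactly why the answer is $J$-independent, namely that the degenerate direction automatically generates a distinguished $J$-complex line. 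The paper's version is shorter given its setup; yours would be the cleaner one to state in isolation or to adapt to a higher-dimensional version of the same phenomenon (a one-dimensional kernel of $S_0$ with the rest of the spectrum constant).
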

\begin{proof} 
The proof is based on the simple remark that, under the hypotheses, if $\vec a=(a_1,a_2,a_3,a_4)$ and $\vec b=(b_1,b_2,b_3,b_4)$ are two vectors in $\R^4$, we have
$$<\vec a,\vec b>_{S_0}=\sigma(<\vec a,\vec b>_\epsilon-a_4b_4)$$
Hence the endomorphism
 $\Sigma_0$ depends only on the last column of the matrix $R$, but every element of $SO(4)/U(2)$ has a representative whose last column is $(0,0,0,1)$, hence 
 $$\Sigma_0=
\sigma\begin{pmatrix}
 2&0\\
 0&1
 \end{pmatrix},$$
 which proves the lemma.
 \goodbreak

\end{proof}
\smallskip

\noindent Examples satisfying  this lemma are the regular tetrahedron with four equal masses or sufficiently symmetric $N$-body configurations.

\section{Picture of $\mathcal{A}$ in an example with $p=3$}

The 6-dimensional space of positive hermitian structures is diffeomorphic to the complex projective space $P_3(\C)$. There are 15 basic hermitian structures $J_\pi$, corresponding to the 15 partitions $\pi$ of the set $\{1,2,3,4,5,6\}$ into 3 pairs. Each $J_\pi$ is invariant under a subgroup of order 8 of $\Gamma$ (described in \ref{Adapted}) and belongs to three circles, each one invariant under a subgroup of order 4 of $\Gamma$ and containing 4 basic hermitian structures $\pm J_\pi$ and $\pm J_{\pi'}$. Each two such circles are contained in a 3-dimensional subspace formed by the adapted structures invariant under a subgroup of $\Gamma$ of order 2. 
\smallskip

In the following figure, the 15 elements of the basic set 
$$\mathcal{B}\subset\{(x_1,y_1,z_1)\in\R^3, x_1+x_2+x_3=I(x_0),\;  x_1\ge x_2\ge x_3\ge 0\}$$
are labelled from 1 to 15 in the case $p=3$ and we have the following ordering~: 
$\sigma_1\ge\sigma_2\ge\cdots\ge\sigma_6$ and
\begin{equation*}
\begin{split}
&\sigma_1+\sigma_2>\sigma_1+\sigma_3>\sigma_1+\sigma_4>\sigma_1+\sigma_5>\sigma_1+\sigma_6>\\
&\sigma_2+\sigma_3>\sigma_2+\sigma_4>
\sigma_2+\sigma_5>\sigma_2+\sigma_6>\\
&\sigma_3+\sigma_4>\sigma_3+\sigma_5>\sigma_3+\sigma_6>\\
&\sigma_4+\sigma_5>\sigma_4+\sigma_6>\\
&\sigma_5+\sigma_6.
\end{split}
\end{equation*}

The total number of partitions of $\{1,2,\cdots,6\}$ into two subsets of 3 elements is $C_3^6=20$. Each choice selects 6 basic frequencies, which makes a total of 120. Hence, each basic frequency belongs to $120/15=8$ families. The grey polygon in the picture is $A_{\sigma_-,\sigma_+}$, where $\sigma_-,\sigma_+$ are determined by the partition 
$$\pi_0=(1,3,5)(2,4,6).$$ 
For its construction we have used the explicit inequalities (6 from Weyl, 5 from Lidskii and Wielandt and one more) which characterize this image in the case $p=3$ (see \cite{F1,B}). The three smaller polygons $A_{\pi_i},\; i=1,2,3$, contained in $A_{\sigma_-,\sigma_+}=A_{\pi_0}$, correspond to the partitions
$$\pi_1=(1,5,6)(2,3,4),\pi_2=(1,2,6)(3,4,5),\pi_3=(1,2,3)(4,5,6).$$

\vspace{0cm}\hspace{-0.5cm}
\includegraphics[scale=1]{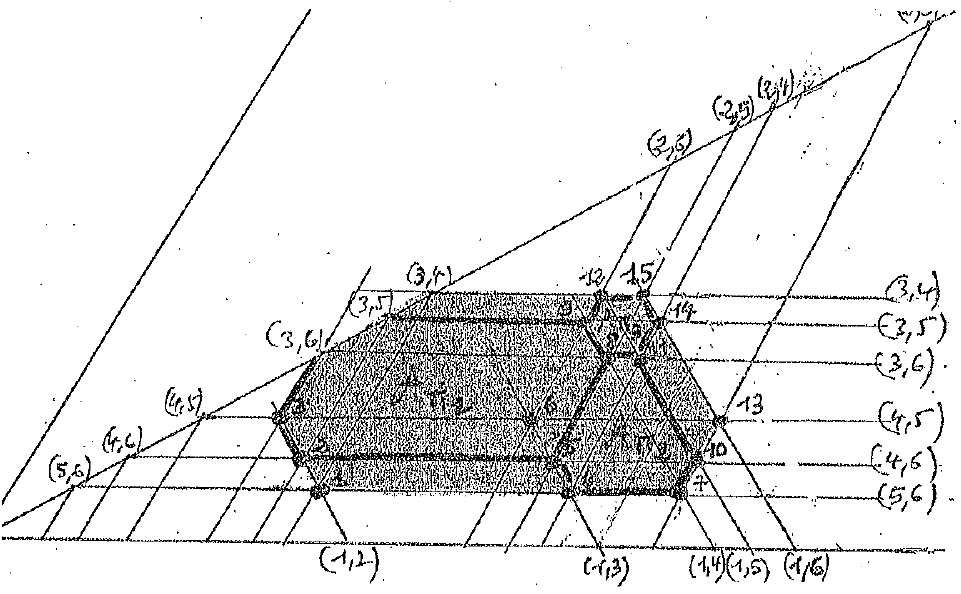}

The labels from (1,2) to (5,6) on the sides of the domain are short notations for $\sigma_1+\sigma_2$ to $\sigma_5+\sigma_6$. 
\smallskip

\section{Bifurcation values of the frequency map}

What values taken by the frequency map correspond to periodic relative equilibrium motions of $x_0$ from which stems a family of quasi-periodic relative equilibria obtained by deforming $x_0$ through balanced configurations~? 
It results from the description we gave in section 2 that, the ordered frequencies at such a bifurcation point lie in $\mathcal{A}$ and even  in the codimension one subset of the image of $\mathcal{F}$ formed by the union of the $\mathcal{A}_{k_1,k_2,\cdots,k_r}$ such that $r\ge 2$.
\smallskip

\noindent {\bf Remarks.} {\bf 1)} The above statement does not imply that the bifurcation values lie on the boundary of the image of 
$\mathcal{F}$.
\smallskip

{\bf 2)} In the generic case where each $E_i$ is of dimension 2, the angular momentum's frequencies $\nu_1,\nu_2,\cdots,\nu_p$ have no choice but being 
$$\nu_i={\omega_i}(\sigma_{2i-1}+\sigma_{2i})\quad \hbox{for}\quad  i=1,\cdots,p.$$

\noindent Symmetric balanced configurations of 3 or 4 bodies and their associated relative equilibrium motions in $\R^3$ or $\R^4$ are studied in \cite{C2}.
\smallskip

\noindent{\bf Acknowledgments} Thanks to Paul Gauduchon, Ivan Kupka, Tudor Ratiu for discussions; thanks to Terry Tao for the key reference [FFLP]; thanks to Hugo Jim\'enez-P\'erez for non-destructive (up to now) numerical testing of the conjecture for $p=3$ and a thorough reading of the manuscript which led to the elimination of many typos and ambiguities.

\vspace{0cm}\hspace{-1.5cm}
\includegraphics[scale=0.5]{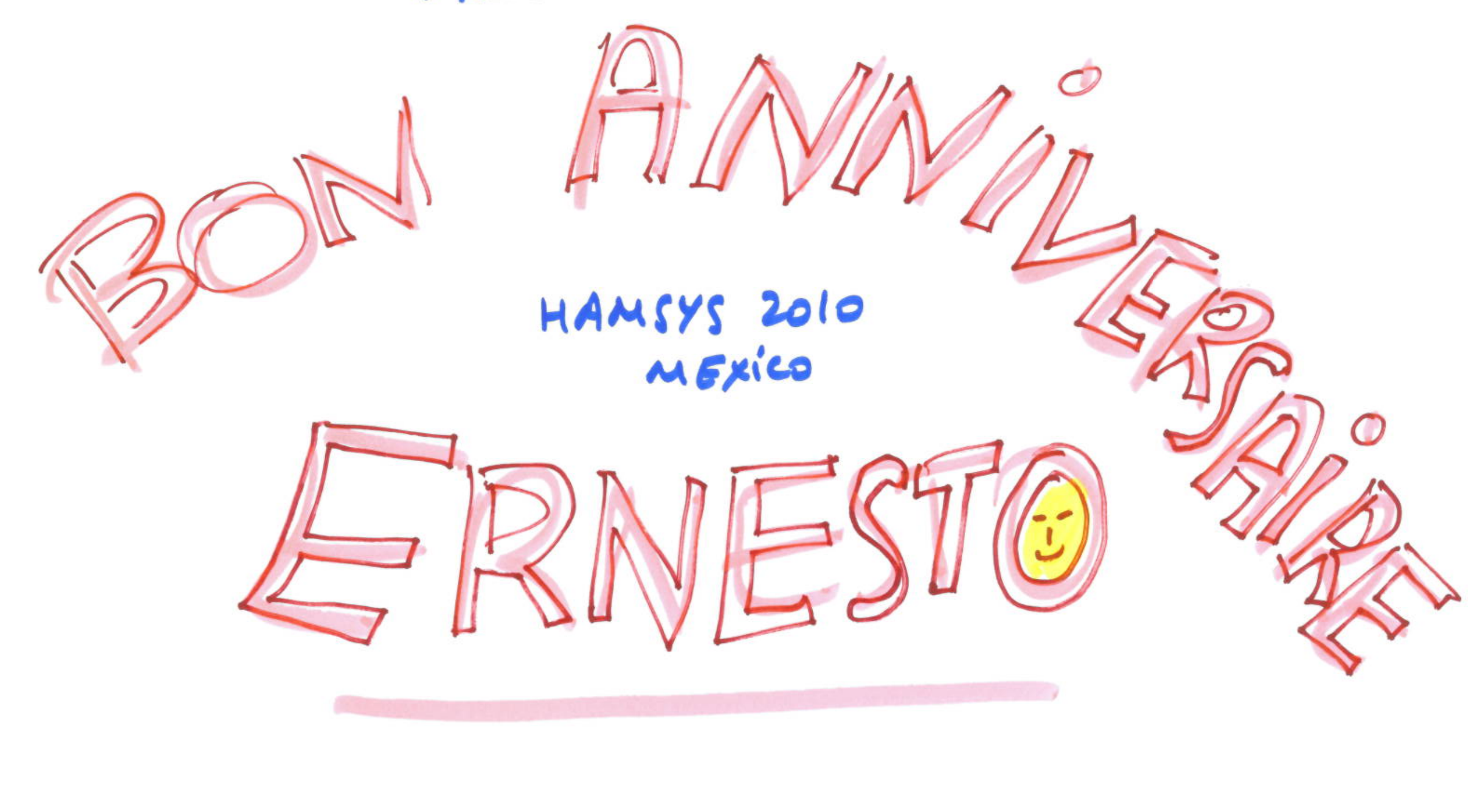}

\noindent{\bf P.S. THE CONJECTURE IS NOW PROVED:}  see \cite{CJ}


\begin{thebibliography}{99}

\bibitem[A1]{A1} A. Albouy {\sl Integral manifolds of the $N$-body problem}, Inventiones Mathematic\ae,
{\bf 114}, 463-488 (1993)

\bibitem[A2]{A2} A. Albouy {\sl Mutual distances in Celestial Mechanics}, Lectures at Nankai Institute, Tianjin, Chine, juin 2004 

\bibitem[AC]{AC} A. Albouy \& A. Chenciner {\sl Le
probl\` eme des $n$ corps et les distances mutuelles}, Inventiones Mathematic\ae,
{\bf 131}, 151-184 (1998)

\bibitem[Ar]{Ar} V.I. Arnold {\sl Mathematical Methods of Classical Mechanics}, chapter 6, Springer ...

\bibitem[B]{B} R. Bhatia {\sl Linear Algebra to Quantum Cohomology: The Story of Alfred Horn's Inequalitites}, The American Mathematical Monthly, Vol. {\bf 108}, No.4, 289-318 (Apr. 2001)   

\bibitem[BCRT]{BCRT} P. Birtea, I. Casu, T. Ratiu, M. Turhan {\sl Stability of equilibria for the $so(4)$ free rigid body}, ArXiv0812.3415v3[math; DS] (Sep. 2010)   

\bibitem[C1]{C1} A. Chenciner {\sl The Lagrange reduction of the $N$-body problem: a survey}, in preparation.

\bibitem[C2]{C2} A. Chenciner {\sl Symmetric 4-body balanced configurations and their relative equilibrium motions}, in preparation.

\bibitem[CJ]{CJ} Alain Chenciner \& Hugo Jimenez Perez {\sl Angular momentum and Horn's problem, arXiv:1110.5030v1 [math.DS]}

\bibitem[F1]{F1} W. Fulton {\sl Eigenvalues of sums of hermitian matrices}, S\'eminaire Bourbaki, expos\'e {\bf 845}, juin 1998

\bibitem[F2]{F2} W. Fulton {\sl Eigenvalues, invariant factors, highest weights, and Schubert calculus},  Bull. Amer. Math. Soc. (N.S.)  {\bf 37}  no. 3, 209-249 (2000) 

\bibitem[FFLP]{FFLP}  S. Fomin, W. Fulton, C.K. Li, Y.T. Poon, 
{\sl Eigenvalues, singular values, and Littlewood-Richardson coefficients},
Amer. J. Math. {\bf 127}, no. 1, 101--127 (2005)

\bibitem[K]{K} A. Knutson {\sl The symplectic and algebraic geometry of Horn's problem}, Linear Algebra and its Applications
{\bf 319}, Issues 1-3, 61-81 (2000) 

\bibitem[KT]{KT} A. Knutson \& T. Tao {\sl Honeycombs and sums of Hermitian matrices}, Notices of the AMS, (February 2001).

\bibitem[LM]{LM} H. B. Lawson Junior \& M. L. Michelson, {\sl Spin Geometry}, Princeton University Press (1989)

\end{thebibliography}
\end{document}